\newtheorem{theorem}{Theorem}
\newtheorem{cor}{Corollary}
\newtheorem{definition}{Definition}
\theoremstyle{remark}
\newtheorem{example}{Example}
\newtheorem{remark}{Remark}
\DeclareMathOperator{\Id}{Id}
\DeclareMathOperator{\esssup}{esssup}
\date{November 2024}
\title{$(L^p, L^q)$ Hyers-Ulam stability}
\begin{document}

\author[D. Dragi\v cevi\'c]{Davor Dragi\v cevi\'c}
\address{Faculty of Mathematics, University of Rijeka, Radmile Matej\v ci\' c 2, 51000 Rijeka, Croatia}
\email{ddragicevic@math.uniri.hr}
\author[M. Onitsuka]{Masakazu Onitsuka}
\address{Department of Applied Mathematics, Faculty of Science, Okayama University of Science, Ridai-cho 1-1, Okayama 700-0005, Japan}
\email{onitsuka@ous.ac.jp}

\begin{abstract}
We introduce a new concept of Hyers-Ulam stability, in which in the size of a pseudosolution of a given ordinary differential equation and its deviation from an exact solution are measured with respect to different norms. These norms are associated to $L^p$-spaces for $p\in [1, \infty]$. Our main objective is to formulate sufficient conditions under which semilinear ordinary differential equations  exhibit such property. In addition, in certain special cases we obtain explicit formulas for the best Hyers-Ulam constant.
\end{abstract}
\keywords{Hyers-Ulam stability; $L^p$-spaces; exponential dichotomy; semilinear differential equations}
\subjclass[2020]{34D10, 34D05}
\maketitle

\section{Introduction}
Let $F\colon \mathbb R^d \times [0, \infty)\to \mathbb R^d$ be a continuous function and consider a nonautonomous and nonlinear differential equation given by 
\begin{equation}\label{ODE}
x'=F(t, x), \quad t\ge 0.
\end{equation}
We recall that~\eqref{ODE} is \emph{Hyers-Ulam stable} if there exists $L>0$ such that for each $\varepsilon >0$ and a continuously differentiable function $y\colon [0, \infty)\to \mathbb R^d$ satisfying
\begin{equation}\label{cond1}
    \sup_{t\ge 0}|y'(t)-F(t, y(t))| \le \varepsilon,
\end{equation}
there exists a solution $x\colon [0, \infty) \to \mathbb R^d$ of~\eqref{ODE} such that 
\begin{equation}\label{cond2}
\sup_{t\ge 0}|x(t)-y(t)| \le L\varepsilon.
\end{equation}
Here, $| \cdot |$ is any norm on $\mathbb R^d$. Therefore, the Hyers-Ulam stability corresponds to the requirement that in a vicinity of an approximate solution of~\eqref{ODE} we can find its true solution.

Observe that in this notion,  the ``size'' of the approximate solution $y$ (see~\eqref{cond1}) and its deviation from the true solution $x$ (see~\eqref{cond2}) are both measured with respect to the supremum norm.  Certainly, the supremum norm is just one of the  possible choices of a norm that can be used to measure these two quantities. 
The main objective of the present paper is  to introduce a more general concept of Hyers-Ulam stability in which these two quantities will be measured in a different manner, and not necessarily with respect to the same norm.

More precisely, for $p\in [1, \infty)$, let $L^p([0, \infty), \mathbb R^d)$ consist of all measurable functions $y\colon [0, \infty)\to \mathbb R^d$ such that 
\begin{equation*}
\|y\|_{L^p([0, \infty), \mathbb R^d)}:=\left ( \int_0^\infty |y(t)|^p \, dt\right )^{1/p}<+\infty.
\end{equation*}
Similarly, one can introduce $L^\infty([0, \infty), \mathbb R^d)$.  Take now two indices $p, q\in [1, \infty]$. We say that~\eqref{ODE} exhibits \emph{$(L^p, L^q)$ Hyers-Ulam stability} if there exists $L>0$ with the property that for each $\varepsilon >0$ and a continuously differentiable function $y\colon [0, \infty)\to \mathbb R^d$ satisfying
\begin{equation*}
\|y'(\cdot)-F(\cdot, y(\cdot))\|_{L^q([0, \infty), \mathbb R^d)} \le \varepsilon,
\end{equation*}
there exists a solution $x\colon [0, \infty) \to \mathbb R^d$ of~\eqref{ODE} such that 
\begin{equation}\label{cond3}
\|x-y\|_{L^p([0, \infty), \mathbb R^d)} \le L\varepsilon.
\end{equation}
We note that the classical notion of Hyers-Ulam stability corresponds to the particular case when $p=q=\infty$.

Focusing on the case when~\eqref{ODE} is of semilinear form, i.e. 
\[
F(t, x)=A(t)x+f(t, x),
\]
where $A(t)\colon \mathbb R^d \to \mathbb R^d$ is a linear operator for each $t\ge 0$, our main result (see Theorem~\ref{main:thm}) says that~\eqref{ODE} admits $(L^p, L^q)$ Hyers-Ulam stability for any choice of pairs $(p, q)$ with $1\le q\le p \le \infty$ provided that:
\begin{itemize}
\item $x'=A(t)x$ admits an exponential dichotomy;
\item $f(t, \cdot)$ is Lipschitz for each $t\ge 0$ with a uniform sufficiently small Lipschitz constant $c\ge 0$.
\end{itemize}
This generalizes~\cite[Theorem 2, $H=\mathbb R^n$]{BDOP} (originally discussed in~\cite{BD}),  where the particular case $p=q=\infty$ was considered. Moreover, we obtain an explicit upper bound  for the constant $L$ that appears in~\eqref{cond3},  and illustrate by concrete examples its sharpness.  Finally, in Section~\ref{last} we discuss linear autonomous differential equations on $\mathbb C^2$ and provide some lower bounds on $L$.

\section{Preliminaries}

Let $X=(X, |\cdot|)$ be an arbitrary Banach space. By $\mathcal B(X)$ we will denote the space of all bounded linear operators equipped with the operator norm $\| \cdot \|$. 
Let $A\colon [0, \infty)\to \mathcal B(X)$ and $f\colon [0, \infty)\times X\to X$ be continuous functions. We consider the associated nonlinear and nonautonomous differential equation
\begin{equation}\label{nde}
x'=A(t)x+f(t, x), \quad t\ge 0.
\end{equation}
We will also consider the linear part of~\eqref{nde} given by 
\begin{equation}\label{lde}
x'=A(t)x, \quad t\ge 0.
\end{equation}
Let us recall the notion of an exponential dichotomy. By $T(t,s)$, $t, s\ge 0$ we will denote the evolution family associated to~\eqref{lde}.
\begin{definition}
We say that~\eqref{lde} admits an \emph{exponential dichotomy} if there exist a family of projections $P(t)$, $t\ge 0$ and constants $D, \lambda>0$ such that the following conditions hold:
\begin{enumerate}
\item for $t, s\ge 0$, $T(t,s)P(s)=P(t)T(t,s)$;
\item for $t\ge s\ge 0$,
\begin{equation}\label{ed1}
    \|T(t,s)P(s)\| \le De^{-\lambda (t-s)};
\end{equation}
\item for $0\le t\le s$,
\begin{equation}\label{ed2}
\|T(t,s)(\Id-P(s))\| \le De^{-\lambda (s-t)},
\end{equation}
where $\Id$ denotes the identity operator on $X$.
\end{enumerate}
In addition, if $P(t)=\Id$ for all $t\ge 0$, we say that \eqref{lde} admits an \emph{exponential contraction}; if $P(t)=0$ for all $t\ge 0$, we say that \eqref{lde} admits an \emph{exponential expansion}. 
\end{definition}

It is well-known that the notions of Hyers-Ulam stability and exponential dichotomy are closely related. This relationship has been investigated for both differential equations (see~\cite{BD, BacDra, BDP, BusBarTab,Dra1,ZadShaSha}), as well as  for difference and dynamic equations (see~\cite{BarBusTab,BusO'ReSaiTab,ZadShaIsmLi,ZadZad}) and random dynamical systems~\cite{DZZ}.

For $p\in [1, \infty)$, let $L^p([0, \infty), X)$ consist of all $x\colon [0, \infty) \to X$ which are Bochner measurable and 
\[
\|x\|_{L^p([0, \infty), X)}:=\left (\int_0^\infty |x(t)|^p \, dt \right)^{\frac 1 p}<+\infty.
\]
By identifying functions which are equal almost everywhere, we have that $(L^p, \| \cdot \|_{L^p([0, \infty), X)})$  is a Banach space.

Similarly, let $L^\infty([0, \infty), X)$ consist of all Bochner measurable functions $x\colon [0, \infty)\to X$ such that 
\[
\|x\|_{L^\infty([0, \infty), X)}:=\esssup_{t\ge 0}|x(t)|<+\infty.
\]
Then,  again by identifying functions which are equal almost everywhere, we have that \\ $(L^\infty, \| \cdot \|_{L^\infty([0, \infty), X)})$ is also a Banach space.

\begin{definition}\label{pqhus}
Let $p, q\in [1, \infty]$. We say that~\eqref{nde} admits \emph{$(L^p, L^q)$ Hyers-Ulam stability} if there exists $L>0$ with the property that for each $\varepsilon>0$ and a continuously differentiable  $y\colon [0, \infty) \to X$ satisfying $y'(\cdot)-A(\cdot)y(\cdot)-f(\cdot, y(\cdot))\in L^q([0, \infty), X)$  and 
\begin{equation}\label{y}
    \|y'(\cdot)-A(\cdot)y(\cdot)-f(\cdot, y(\cdot))\|_{L^q([0, \infty), X)}\le \varepsilon,
\end{equation}
there exists a solution $x\colon [0, \infty)\to X$ of~\eqref{nde} such that $x-y\in L^p([0, \infty), X)$ and 
\begin{equation}\label{approx}
\| x-y\|_{L^p([0, \infty), X)}\le L\varepsilon.
\end{equation}
In addition, we say that $L$ is a \emph{Hyers-Ulam stability (HUS) constant}.
\end{definition}
\begin{remark}
We note that the standard concept of Hyers-Ulam stability corresponds to the choice $p=q=\infty$.  In contrast to this case, Definition~\ref{pqhus} allows that the size of an approximate solution $y$ of~\eqref{nde} and its deviation from exact solution $x$ of~\eqref{nde} are measured with respect to different norms.
\end{remark}

\section{Main results}

The following result gives sufficient conditions under which~\eqref{nde} exhibits $(L^p, L^q)$ Hyers-Ulam stability. 

\begin{theorem}\label{main:thm}
Let $\infty \ge p\ge q\ge 1$. Suppose that~\eqref{lde} admits an exponential dichotomy and that there exists $c\in [0, \frac{\lambda}{2D}) $ such that 
\begin{equation}\label{lip}
|f(t, v_1)-f(t, v_2)| \le c|v_1-v_2|, \quad \text{for $v_1, v_2\in X$ and $t\ge 0$.}
\end{equation}
Then, \eqref{nde} admits $(L^p, L^q)$ Hyers-Ulam stability.

\end{theorem}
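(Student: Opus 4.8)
The plan is to reduce the statement to the problem of finding an $L^p$ solution of a perturbed linear equation, and then to construct that solution by a fixed point argument built around the Green's operator coming from the exponential dichotomy. Write $g(\cdot):=y'(\cdot)-A(\cdot)y(\cdot)-f(\cdot, y(\cdot))$, so that by hypothesis $g\in L^q([0,\infty),X)$ with $\|g\|_{L^q}\le\varepsilon$. Seeking an exact solution of the form $x=y+z$, a direct substitution shows that $x$ solves \eqref{nde} precisely when $z$ solves
\begin{equation*}
z'=A(t)z+h(t,z)-g(t),\qquad h(t,z):=f(t,y(t)+z)-f(t,y(t)),
\end{equation*}
and \eqref{lip} gives $|h(t,z)|\le c|z|$ together with $h(t,0)=0$. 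Hence it suffices to produce $z\in L^p([0,\infty),X)$ solving this equation with $\|z\|_{L^p}\le L\varepsilon$, since then $\|x-y\|_{L^p}=\|z\|_{L^p}$.

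To invert the linear part I would use the Green's operator associated to the dichotomy,
\begin{equation*}
(\mathcal G w)(t):=\int_0^t T(t,s)P(s)w(s)\,ds-\int_t^\infty T(t,s)(\Id-P(s))w(s)\,ds,
\end{equation*}
which for suitable $w$ yields the solution of $u'=A(t)u+w(t)$ lying in the relevant $L^p$ space. The kernel of $\mathcal G$ is dominated by $De^{-\lambda|t-s|}$ thanks to \eqref{ed1}--\eqref{ed2}, so, extending $w$ by zero to $\mathbb R$ and comparing with convolution against $k(u):=De^{-\lambda|u|}$, Young's inequality furnishes the two bounds I need: $\mathcal G$ maps $L^q$ into $L^q$ with norm at most $\|k\|_{L^1}=2D/\lambda$, and $\mathcal G$ maps $L^q$ into $L^p$ with norm at most $M:=\|k\|_{L^r}$, where $1/r=1+1/p-1/q$. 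The constraint $p\ge q$ is exactly what guarantees $r\ge 1$, i.e. that this second bound is available at all; this is the structural reason the theorem requires $q\le p$.

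I would then set up the fixed point problem in $L^q$ rather than in $L^p$. For $z\in L^q$ the estimate $|h(t,z)|\le c|z|$ shows $h(\cdot,z)-g\in L^q$, so the map $\mathcal N(z):=\mathcal G(h(\cdot,z)-g)$ sends $L^q$ into itself, and
\begin{equation*}
\|\mathcal N(z_1)-\mathcal N(z_2)\|_{L^q}\le \frac{2D}{\lambda}\,c\,\|z_1-z_2\|_{L^q}.
\end{equation*}
Since $c<\lambda/(2D)$ the factor $2Dc/\lambda$ is strictly less than $1$, so $\mathcal N$ is a contraction and has a unique fixed point $z^*\in L^q$; by construction $x=y+z^*$ solves \eqref{nde}. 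The final point is to upgrade the $L^q$ control of $z^*$ to the desired $L^p$ bound: feeding $h(\cdot,z^*)-g\in L^q$ through the $L^q\to L^q$ bound gives $\|z^*\|_{L^q}\le \frac{2D\varepsilon}{\lambda-2Dc}$, and then the $L^q\to L^p$ bound yields
\begin{equation*}
\|z^*\|_{L^p}\le M\bigl(c\|z^*\|_{L^q}+\varepsilon\bigr)\le M\,\frac{\lambda}{\lambda-2Dc}\,\varepsilon,
\end{equation*}
so one may take $L=M\lambda/(\lambda-2Dc)$. I expect the main obstacle to be not the contraction itself but this mismatch of norms: the nonlinearity only closes in a single $L^q$ space, whereas the conclusion is stated in $L^p$, and reconciling the two relies on running the fixed point in $L^q$ and exploiting $\mathcal G$ as a smoothing operator from $L^q$ to $L^p$. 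Secondary technical care is needed to justify that $\mathcal Gw$ is genuinely a (differentiable) solution of the inhomogeneous linear equation and that the defining integrals converge for $w\in L^q$, but these are standard consequences of \eqref{ed1}--\eqref{ed2}.
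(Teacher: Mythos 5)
Your argument is correct and rests on the same machinery as the paper's proof: the Green's operator $\mathcal G$ built from the dichotomy projections is exactly the paper's $\mathcal T=\mathcal T_1+\mathcal T_2$ (note that $A(s)y(s)+f(s,y(s)+z(s))-y'(s)=h(s,z(s))-g(s)$ in your notation), the kernel bound $De^{-\lambda|t-s|}$ and Young's inequality are the key estimates in both, and the contraction constant $2Dc/\lambda<1$ is the same. The genuine difference is where you run the contraction. The paper works directly in $L^p$: it handles the nonlinear term via the $L^1\ast L^p\to L^p$ form of Young's inequality (norm $1/\lambda$ for each one-sided kernel) and the inhomogeneous term via $L^r\ast L^q\to L^p$, then finds the fixed point in the ball of radius $L\varepsilon$ in $L^p$, so the desired bound \eqref{approx} is immediate. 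You instead close the fixed-point problem entirely in $L^q$ (where both $h(\cdot,z)$ and $g$ naturally live) and only afterwards bootstrap to $L^p$ by feeding the fixed-point identity $z^*=\mathcal G(h(\cdot,z^*)-g)$ through the $L^q\to L^p$ bound. Both routes are valid and both isolate $p\ge q$ as the condition making $r\ge1$; your version has the minor aesthetic advantage that the nonlinearity is only ever estimated in one space, and treating the two-sided kernel as a single convolution gives $\|k\|_{L^r}=D\left(\frac{2}{\lambda r}\right)^{1/r}$, which is slightly sharper than the paper's $2D\left(\frac{1}{\lambda r}\right)^{1/r}$ when $r>1$, so your final constant $L=\|k\|_{L^r}\,\lambda/(\lambda-2Dc)$ is never worse than the paper's. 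The only point you should make explicit (you flag it, and the paper is equally terse about it) is that the $L^q$ fixed point $z^*$ agrees a.e.\ with the continuous function $\mathcal G(h(\cdot,z^*)-g)$, whence one may replace $z^*$ by this representative and differentiate the integral formula to confirm that $x=y+z^*$ is a genuine $C^1$ solution of \eqref{nde}.
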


\begin{proof}
Take $\varepsilon>0$ and let $y\colon [0, \infty) \to X$ be a continuously differentiable function satisfying $y'(\cdot)-A(\cdot)y(\cdot)-f(\cdot, y(\cdot))\in L^q([0, \infty), X)$  and~\eqref{y}.
For $z\in L^p([0, \infty), X)$, we set 
\[
\begin{split}
(\mathcal T_1 z)(t)&:=\int_0^t T(t,s )P(s)(A(s) y(s)+f(s, y(s)+z(s))-y'(s))\, ds,
\end{split}
\]
for $t\ge 0$. Observe that~\eqref{ed1} and~\eqref{lip} imply that 
\[
\begin{split}
&\left |\int_0^t T(t,s )P(s)(A(s) y(s)+f(s, y(s)+z(s))-y'(s))\, ds\right | \\
&\le \int_0^t |T(t,s )P(s)(A(s) y(s)+f(s, y(s)+z(s))-y'(s))| \, ds \\
&\le \int_0^t \|T(t,s) P(s)\| \cdot |A(s) y(s)+f(s, y(s))-y'(s)|\, ds \\
&\phantom{\le}+
\int_0^t \|T(t,s)P(s)\|\cdot  |f(s, y(s)+z(s))-f(s, y(s))|\, ds \\
&\le D\int_0^t e^{-\lambda (t-s)}|A(s) y(s)+f(s, y(s))-y'(s)|\, ds \\
&\phantom{\le}+cD\int_0^t e^{-\lambda (t-s)}|z(s)| \, ds,
\end{split}
\]
for $t\ge 0$.
Let $a_1, a_2\colon \mathbb R \to \mathbb R$ be given by 
\[
a_1(t):=\begin{cases}
\int_0^t e^{-\lambda (t-s)}|A(s) y(s)+f(s, y(s))-y'(s)|\, ds & t\ge 0; \\
0 & t<0,
\end{cases}
\]
and 
\[
a_2(t):=\begin{cases}
    \int_0^t e^{-\lambda (t-s)}|z(s)| \, ds & t\ge 0; \\
0 &t<0.
\end{cases}
\]
 Observe that $a_i=b\ast c_i$ for $i\in \{1, 2\}$, where $\ast$ denotes the convolution, 
\[
b(t)=\begin{cases}
e^{-\lambda t} &t\ge 0; \\
0& t<0,
\end{cases}
\]
\[
c_1(t)=\begin{cases}
|A(t)y(t)+f(t, y(t))-y'(t)| &t\ge 0; \\
0 &t<0,
\end{cases} \quad \text{and} \quad
c_2(t)=\begin{cases}
|z(t)| & t\ge 0;\\
0& t<0.
\end{cases}
\]
Choose $r\in [1, \infty]$ such that $\frac 1 p+1=\frac 1 q+\frac 1 r$. By applying Young's inequality, we obtain that 
\[
\|a_1\|_{L^p(\mathbb R)}\le \|b\|_{L^r(\mathbb R)} \cdot \|c_1\|_{L^q(\mathbb R)} \le \left (\frac{1}{\lambda r}\right )^{\frac 1 r}\|c_1\|_{L^q(\mathbb R)},
\]
and similarly
\[
\|a_2\|_{L^p(\mathbb R)}\le \|b\|_{L^1(\mathbb R)} \cdot \|c_2\|_{L^p(\mathbb R)}\le \frac{1}{\lambda}\|c_2\|_{L^p(\mathbb R)}.
\]
Here (and in the rest of the paper) we use convention  $0^0 :=1$ when $r=\infty$.
We conclude (see also~\eqref{y}) that $\mathcal T_1 z\in L^p([0, \infty), X)$ and 
\begin{equation}\label{z1}
\|\mathcal T_1 z\|_{L^p([0, \infty), X)}\le D\left (\frac{1}{\lambda r}\right )^{\frac 1 r}\varepsilon+\frac{cD}{\lambda}\|z\|_{L^p([0, \infty), X)}.
\end{equation}

We now set 
\[
(\mathcal T_2 z)(t):=-\int_t^\infty T(t,s)(\Id-P(s))(A(s) y(s)+f(s, y(s)+z(s))-y'(s))\, ds,
\]
for $t\ge 0$ and $z\in L^p([0, \infty), X)$. By~\eqref{ed2} and~\eqref{lip}, we have that 
\[
\begin{split}
&\left |\int_t^\infty T(t,s )(\Id-P(s))(A(s) y(s)+f(s, y(s)+z(s))-y'(s))\, ds\right | \\
&\le \int_t^\infty \|T(t,s)(\Id- P(s))\| \cdot |A(s) y(s)+f(s, y(s))-y'(s)|\, ds \\
&\phantom{\le}+
\int_t^\infty \|T(t,s)(\Id-P(s))\|\cdot  |f(s, y(s)+z(s))-f(s, y(s))|\, ds \\
&\le D\int_t^\infty e^{-\lambda (s-t)}|A(s) y(s)+f(s, y(s))-y'(s)|\, ds \\
&\phantom{\le}+cD\int_t^\infty e^{-\lambda (s-t)}|z(s)| \, ds,
\end{split}
\]
for $t\ge 0$. Let $\bar a_1, \bar a_2\colon \mathbb R \to \mathbb R$ be given by 
\[
\bar a_1(t):=\begin{cases}
\int_t^\infty e^{-\lambda (s-t)}|A(s) y(s)+f(s, y(s))-y'(s)|\, ds & t\ge 0; \\
0 & t<0,
\end{cases}
\]
and 
\[
\bar a_2(t):=\begin{cases}
    \int_t^\infty e^{-\lambda (s-t)}|z(s)| \, ds & t\ge 0; \\
0 &t<0.
\end{cases}
\]
 Then,  $\bar a_i=\bar b\ast c_i$ for $i\in \{1, 2\}$, where 
\[
\bar b(t)=\begin{cases}
0 &t\ge 0; \\
e^{\lambda t}& t<0.
\end{cases}
\]
By Young's inequality, we have that 
\[
\|\bar a_1\|_{L^p(\mathbb R)}\le \|\bar b\|_{L^r(\mathbb R)} \cdot \|c_1\|_{L^q(\mathbb R)} \le \left (\frac{1}{\lambda r}\right )^{\frac 1 r}\|c_1\|_{L^q(\mathbb R)}
\]
and 
\[
\|\bar a_2\|_{L^p(\mathbb R)}\le \frac{1}{\lambda}\|c_2\|_{L^p(\mathbb R)}.
\]
Hence, $\mathcal T_2 z\in L^p([0, \infty), X)$ and 
\begin{equation}\label{z2}
\|\mathcal T_2 z\|_{L^p([0, \infty), X)}\le D\left (\frac{1}{\lambda r}\right )^{\frac 1 r}\varepsilon+\frac{cD}{\lambda}\|z\|_{L^p([0, \infty), X)}.
\end{equation}
Let 
\[
\mathcal Tz:=\mathcal T_1 z+\mathcal T_2z, \quad z\in L^p([0, \infty), X).
\]
By~\eqref{z1} and~\eqref{z2} we have that $\mathcal Tz\in L^p([0, \infty), X)$ and 
\begin{equation}\label{Tz}
    \|\mathcal Tz\|_{L^p([0, \infty), X)} \le 2D\left (\frac{1}{\lambda r}\right )^{\frac 1 r}\varepsilon+\frac{2cD}{\lambda}\|z\|_{L^p([0, \infty), X)}, \quad z\in L^p([0, \infty), X).
\end{equation}
In particular, we have that 
\begin{equation}\label{zero}
\|\mathcal T0\|_{L^p([0, \infty), X)} \le 2D\left (\frac{1}{\lambda r}\right )^{\frac 1 r}\varepsilon.
\end{equation}

Take now $z_i\in L^p([0, \infty), X)$, $i\in \{1, 2\}$. 
We have that 
\[
(\mathcal T_1 z_1)(t)-(\mathcal T_1 z_2)(t)=\int_0^t T(t,s)P(s)(f(s, y(s)+z_1(s))-f(s, y(s)+z_2(s)))\, ds, \quad t\ge 0.
\]
By~\eqref{ed1} and~\eqref{lip}, we have that 
\[
|(\mathcal T_1 z_1)(t)-(\mathcal T_1 z_2)(t)|\le cD\int_0^te^{-\lambda (t-s)}|z_1(s)-z_2(s)|\, ds, \quad t\ge 0.
\]
Using Young's inequality gives that 
\begin{equation}\label{lip1}
\|\mathcal T_1 z_1-\mathcal T_1 z_2\|_{L^p([0, \infty), X)}\le \frac{cD}{\lambda}\|z_1-z_2\|_{L^p([0, \infty), X)}.
\end{equation}
Similarly, 
\begin{equation}\label{lip2}
\|\mathcal T_2 z_1-\mathcal T_2 z_2\|_{L^p([0, \infty), X)}\le \frac{cD}{\lambda}\|z_1-z_2\|_{L^p([0, \infty), X)}.
\end{equation}
From~\eqref{lip1} and~\eqref{lip2} it follows that 
\begin{equation}\label{contr}
\|\mathcal T z_1-\mathcal T z_2\|_{L^p([0, \infty), X)}\le \frac{2cD}{\lambda}\|z_1-z_2\|_{L^p([0, \infty), X)}, \quad \text{for $z_1, z_2\in L^p([0, \infty), X)$.}
\end{equation}
Let
\[
L:=\frac{2D\left (\frac{1}{\lambda r}\right )^{\frac 1 r}}{1-\frac{2cD}{\lambda}}>0,
\]
and define
\[
\mathcal D:=\left \{z\in L^p([0, \infty), X): \ \|z\|_{L^p([0, \infty), X)}\le L\varepsilon \right \}.
\]
By~\eqref{zero} and~\eqref{contr}, we have that 
\[
\begin{split}
\|\mathcal T z\|_{L^p([0, \infty), X)}&\le \|\mathcal T 0\|_{L^p([0, \infty), X)}+\|\mathcal T z-\mathcal T 0\|_{L^p([0, \infty), X)} \\
&\le 2D\left (\frac{1}{\lambda r}\right )^{\frac 1 r}\varepsilon+\frac{2cD}{\lambda}\|z\|_{L^p([0, \infty), X)} \\
&\le 2D\left (\frac{1}{\lambda r}\right )^{\frac 1 r}\varepsilon+
\frac{2cD}{\lambda}L\varepsilon \\
&=L\varepsilon.
\end{split}
\]
Hence, $\mathcal T\mathcal D\subset \mathcal D$ and thus due to~\eqref{contr} we have that $\mathcal T$ is a contraction on $\mathcal D$. Consequently, $\mathcal T$ has a unique fixed point $z\in \mathcal D$. Then, $x:=y+z$ is a solution of~\eqref{nde}, $x-y=z\in L^p([0, \infty), X)$ and 
\[
\|x-y\|_{L^p([0, \infty), X)}=\|z\|_{L^p([0, \infty), X)}\le L\varepsilon.
\]
We conclude that~\eqref{approx} holds.
The proof of the theorem is completed. 
\end{proof}

\begin{cor}\label{Lpqlinear}
Let $\infty \ge p\ge q\ge 1$ and suppose that~\eqref{lde} admits an exponential dichotomy. Then, \eqref{lde} admits $(L^p, L^q)$ Hyers-Ulam stability.
\end{cor}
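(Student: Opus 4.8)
The plan is to obtain this statement as an immediate special case of Theorem~\ref{main:thm}. The linear equation~\eqref{lde} is precisely the semilinear equation~\eqref{nde} with the nonlinearity $f$ taken to be identically zero, so the natural strategy is to verify that the choice $f\equiv 0$ satisfies the hypotheses of Theorem~\ref{main:thm} and then simply read off the conclusion.

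First I would observe that with $f\equiv 0$ the Lipschitz condition~\eqref{lip} holds with constant $c=0$, since $|f(t,v_1)-f(t,v_2)|=0\le 0\cdot|v_1-v_2|$ for all $v_1,v_2\in X$ and $t\ge 0$. Next I would check the admissibility constraint $c\in[0,\tfrac{\lambda}{2D})$ required by the theorem: because the definition of exponential dichotomy guarantees $D,\lambda>0$, we have $\tfrac{\lambda}{2D}>0$, and hence $0\in[0,\tfrac{\lambda}{2D})$. The exponential dichotomy of~\eqref{lde} is assumed directly in the hypothesis of the corollary, so both conditions of Theorem~\ref{main:thm} are met for the given pair $(p,q)$ with $\infty\ge p\ge q\ge 1$.

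Applying Theorem~\ref{main:thm} with this $f$ then yields that~\eqref{nde}, which here coincides with~\eqref{lde}, admits $(L^p,L^q)$ Hyers-Ulam stability, exactly as claimed. I do not anticipate any genuine obstacle; the only point requiring a moment's care is confirming that $c=0$ lies in the half-open interval $[0,\tfrac{\lambda}{2D})$, which is immediate once one notes that the dichotomy constants are strictly positive. It is worth remarking that in this linear setting the contraction constant $\tfrac{2cD}{\lambda}$ appearing in the proof of Theorem~\ref{main:thm} vanishes, so the operator $\mathcal T$ becomes constant and the fixed point is produced directly; correspondingly the resulting HUS constant simplifies to $L=2D\bigl(\tfrac{1}{\lambda r}\bigr)^{1/r}$, with $r\in[1,\infty]$ determined by $\tfrac1p+1=\tfrac1q+\tfrac1r$.
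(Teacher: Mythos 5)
Your proposal is correct and matches the paper's proof exactly: the paper also obtains the corollary by applying Theorem~\ref{main:thm} with $f\equiv 0$ (so $c=0$, which trivially lies in $[0,\tfrac{\lambda}{2D})$). Your additional remarks about the contraction constant vanishing and the resulting HUS constant $L=2D\bigl(\tfrac{1}{\lambda r}\bigr)^{1/r}$ agree with Remark~\ref{rem:HUSconst}.
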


\begin{proof}
We apply the previous theorem for $f\equiv 0$.
\end{proof}

\begin{remark}
Suppose that~\eqref{lde} admits an exponential dichotomy and  choose an arbitrary continuous function $w\colon [0, \infty)\to \mathbb R$ which is unbounded (i.e. $\sup_{t\ge 0}|w(t)|=+\infty$) but belongs to $L^q([0, \infty), \mathbb R)$ for some $q\in [1, \infty)$. Let $y$ be a solution of the inhomogeneous equation
\[
y'=A(t)y+w(t).
\]
Then, $y$ cannot be regarded as a pseudo-solution of~\eqref{lde} in the classical sense (see~\eqref{cond1} for $F(t,x)=A(t)x$) as $\sup_{t\ge 0}|y'(t)-A(t)y(t)|=+\infty$. However, it does satisfy~\eqref{y} for $f\equiv 0$. This shows that Theorem~\ref{main:thm} is applicable beyond its previously known version when $p=q=\infty$.
\end{remark}

\begin{remark}\label{rem:HUSconst}
From the proof of Theorem~\ref{main:thm}, we see that the HUS constant in Theorem~\ref{main:thm} is given by
\[
L=\frac{2D\left (\frac{1}{\lambda r}\right )^{\frac 1 r}}{1-\frac{2cD}{\lambda}},
\]
where $r$ satisfies $\frac 1 p+1=\frac 1 q+\frac 1 r$. 
Moreover, we see that the HUS constant in Corollary~\ref{Lpqlinear} is
\[
L=2D\left (\frac{1}{\lambda r}\right )^{\frac 1 r}.
\]

In special cases the values of these constants can be nearly halved. If we assume that \eqref{lde} admits an exponential contraction or an exponential expansion on $[0,\infty)$, then the HUS constant in Theorem~\ref{main:thm} is 
\begin{equation}\label{NL}L=\frac{D\left (\frac{1}{\lambda r}\right )^{\frac 1 r}}{1-\frac{cD}{\lambda}}\end{equation} 
with $c\in [0, \frac{\lambda}{D})$. Also, under the same assumption, we have the HUS constant in Corollary~\ref{Lpqlinear} is
\[
L=D\left (\frac{1}{\lambda r}\right )^{\frac 1 r}.
\]
To briefly outline the proof, if we assume that \eqref{lde} admits an exponential contraction or an exponential expansion on $[0,\infty)$ in the proof of Theorem~\ref{main:thm}, one of the inequalities  \eqref{z1} or \eqref{z2} is redundant. Therefore, the inequality given by \eqref{Tz} can be rewritten as: 
\begin{equation*}
    \|\mathcal Tz\|_{L^p([0, \infty), X)} \le D\left (\frac{1}{\lambda r}\right )^{\frac 1 r}\varepsilon+\frac{cD}{\lambda}\|z\|_{L^p([0, \infty), X)}, \quad z\in L^p([0, \infty), X).
\end{equation*}
Using this with $c\in [0, \frac{\lambda}{D})$, we can conclude that the HUS constant is given by~\eqref{NL}. 
\end{remark}

\begin{remark}
Assume that~\eqref{lde} admits an exponential expansion and take $\infty \ge p\ge q\ge 1$. Then, for each $\varepsilon>0$ and a continuously differentiable $y\colon [0, \infty) \to X$ such that $\|y'(\cdot)-A(\cdot)y(\cdot)\|_{L^q([0, \infty), X)}\le \varepsilon$, there exists a \emph{unique} solution $x\colon [0, \infty) \to X$ of~\eqref{lde} such that $\|x-y\|_{L^p([0, \infty), X)}\le L\varepsilon$, where $L$ is the HUS constant. In fact, $x$ is the unique solution of~\eqref{lde} with the property that $\|x-y\|_{L^p([0, \infty), X)}<+\infty$.

In order to show this, suppose that $x, \tilde x\colon [0, \infty)\to X$ are two solutions of~\eqref{lde} such that 
\[
\|x-y\|_{L^p([0, \infty), X)}<+\infty \quad \text{and} \quad \|\tilde x-y\|_{L^p([0, \infty), X)}<+\infty.
\]
Then, $z:=x-\tilde x$ is a solution of~\eqref{lde} such that $\|z\|_{L^p([0, \infty), X)}<+\infty$. On the other hand, 
\[
|z(0)|=|T(0, t)z(t)| \le De^{-\lambda t}|z(t)|,
\]
and thus 
\[
\frac 1 D e^{\lambda t} |z(0)| \le |z(t)|, \quad t\ge 0.
\]
Since $\|z\|_{L^p([0, \infty), X)}<+\infty$, we find that $z(0)=0$ and thus $z\equiv 0$. We conclude that $x=\tilde x$.
\end{remark}

We now present several examples  to illustrate our results.

\begin{example}\label{ex1}
We consider the nonlinear differential equation
\begin{equation}\label{ex:sin}
  x' = ax+b\sin x, \quad t\ge 0,
\end{equation}
where $a>0$, $b\in \mathbb{R}$ and $|b| \in [0, a)$. This equation is a special case of \eqref{nde} when $n=1$, $A(t)=a$ and $f(t,x)=b\sin x$. The linear part of~\eqref{ex:sin} given by 
\begin{equation}\label{ex:lde}
x'=ax, \quad t\ge 0.
\end{equation}
Then we see that $x(t)=Ce^{at}$ is the general solution of \eqref{ex:lde}, where $C\in \mathbb{R}$ is an arbitrary constant. Thus, we obtain the associated  evolution family  is given by $T(t,s)=e^{a(t-s)}$ for $t, s\ge 0$. Consequently,
\[ \|T(t,s)\| \le e^{-a(s-t)} \]
for $0\le t\le s$. That is, \eqref{ed2} holds with $D=1$, $\lambda = a$ and $P(t)=0$ for $t\ge 0$. Hence, \eqref{ex:lde} admits an exponential dichotomy. More precisely, \eqref{ex:lde} admits an exponential expansion. From
\[ \left|\frac{b\sin v_1 - b\sin v_2}{v_1-v_2}\right| = |b|\left|\frac{2\sin \frac{v_1 + v_2}{2} \sin\frac{v_1 - v_2}{2}}{v_1-v_2}\right| \le |b|, \]
we have $|b\sin v_1 - b\sin v_2| \le |b||v_1-v_2|$ for $v_1, v_2\in \mathbb{R}$. That is, \eqref{lip} holds with $c=|b|$. Note that the assumption $|b| \in [0, a)$ implies that $c\in [0, \frac{\lambda}{D})$. 

Let $\infty \ge p\ge q\ge 1$. By Theorem~\ref{main:thm}, we conclude that there exists $L>0$ such that for each $\varepsilon>0$ and continuously differentiable function $y\colon [0, \infty) \to \mathbb{R}$ such that $y'(\cdot)-ay(\cdot)-b\sin y(\cdot)\in L^q([0, \infty), \mathbb{R})$
and 
\[ \|y'(\cdot)-ay(\cdot)-b\sin y(\cdot)\|_{L^q([0, \infty), \mathbb{R})}\le \varepsilon, \]
there exists a solution $x\colon [0, \infty)\to \mathbb{R}$ of~\eqref{ex:sin} such that $x-y\in L^p([0, \infty), \mathbb{R})$ and 
\[ \| x-y\|_{L^p([0, \infty), \mathbb{R})}\le L\varepsilon. \]
In addition, from Remark~\ref{rem:HUSconst} we know that the HUS constant $L$ is given by
\[
L=\frac{\left (\frac{1}{a r}\right )^{\frac 1 r}}{1-\frac{|b|}{a}}, 
\]
where 
\begin{equation}\label{r}
r=\begin{cases}
\frac{pq}{q-p+pq} & \text{if $p<\infty$;} \\
\infty & \text{if $p=\infty$ and $q=1$;} \\
\frac{q}{q-1} & \text{if $p=\infty$ and $q\in (1, \infty)$;} \\
1 & \text{if $p=q=\infty$.}
\end{cases}
\end{equation}
\end{example}

In the next example, we consider an equation with a more specific perturbation term and illustrate the sharpness of our results. More precisely, we explain that the obtained HUS constant is sharp. 

\begin{example}\label{ex2}
Consider the equation
\begin{equation}\label{ex2:pertsin}
  y' = ay+b\sin y+e^{-\gamma t}, \quad t\ge 0,
\end{equation}
where $a>0$, $b\in \mathbb{R}$, $|b| \in [0, a)$ and $\gamma > 0$. This equation can be seen as a perturbation of~\eqref{ex:sin}. Let $y\colon [0, \infty)\to \mathbb R$ be a solution of~\eqref{ex2:pertsin}. Then,
\[ \|y' - ay-b\sin y\|_{L^q([0, \infty), \mathbb{R})} = \|e^{-\gamma t}\|_{L^q([0, \infty), \mathbb{R})} = \left(\frac{1}{q\gamma}\right)^{\frac{1}{q}} < \infty. \]
Using the facts given in Example~\ref{ex1} with $\varepsilon = \left(\frac{1}{q\gamma}\right)^{\frac{1}{q}}$, we have that there exists a solution $x\colon [0, \infty)\to \mathbb{R}$ of~\eqref{ex:sin} such that $x-y\in L^p([0, \infty), \mathbb{R})$ and 
\[ \| x-y\|_{L^p([0, \infty), \mathbb{R})}\le \frac{\left (\frac{1}{a r}\right )^{\frac 1 r}}{1-\frac{|b|}{a}}\left(\frac{1}{q\gamma}\right)^{\frac{1}{q}}, \]
where $r$ is given by~\eqref{r}. 

For the sake of simplicity, we consider the case that $b=0$ and $p=q$ below. That is, we consider equations~\eqref{ex:lde} and \eqref{ex2:pertsin} with $b=0$. 
From the above facts, we obtain HUS constant $\frac{1}{a}$ and $\varepsilon = \left(\frac{1}{p\gamma}\right)^{\frac{1}{p}}$. 
In this case, we have a solution
\[ y(t) = -\frac{1}{a+\gamma}e^{-\gamma t} \]
of \eqref{ex2:pertsin}. Since $\|y' - ay\|_{L^p([0, \infty), \mathbb{R})} = \left(\frac{1}{p\gamma}\right)^{\frac{1}{p}} < \infty$, we see that there exists a solution $x\colon [0, \infty)\to \mathbb{R}$ of~\eqref{ex:lde} such that $x-y\in L^p([0, \infty), \mathbb{R})$ and 
\[ \| x-y\|_{L^p([0, \infty), \mathbb{R})}\le \frac{1}{a}\left(\frac{1}{p\gamma}\right)^{\frac{1}{p}}. \]
Let us identify $x$. Consider the general solution $\xi(t) = C e^{at}$ of \eqref{ex:lde}, where $C\in \mathbb{R}$ is an arbitrary constant.  If we assume that $C\ne0$, then 
\[ \| \xi-y\|_{L^p([0, \infty), \mathbb{R})} = \left\|C e^{at}+\frac{1}{a+\gamma}e^{-\gamma t}\right\|_{L^p([0, \infty), \mathbb{R})} = \left (\int_0^\infty \left|C e^{at}+\frac{1}{a+\gamma}e^{-\gamma t}\right|^p \, dt \right)^{\frac 1 p} = \infty. \]
Therefore, $x\equiv 0$ and consequently 
\[ \| x-y\|_{L^p([0, \infty), \mathbb{R})} = \left\|\frac{1}{a+\gamma}e^{-\gamma t}\right\|_{L^p([0, \infty), \mathbb{R})} = \frac{1}{a+\gamma}\left(\frac{1}{p\gamma}\right)^{\frac{1}{p}} < \infty. \]
Since $\varepsilon=\left(\frac{1}{p\gamma}\right)^{\frac{1}{p}}$ and $\gamma>0$ was arbitrary,  we can conclude that the HUS constant is greater or equal than $\frac{1}{a}$.  
Since the HUS constant obtained by our theorem in this case is $\frac{1}{a}$, we observe  that $\frac 1 a$ is the smallest HUS constant. This illustrates the sharpness of our results. 
\end{example}

\begin{example}
The purpose of this example is to illustrate that the condition that $p\ge q$ cannot be omitted in the statement of Theorem~\ref{main:thm}. To this end, we essentially rely on~\cite[Example 3.15]{AS}. 

Take $1\le p<q \le +\infty$, $\delta \in (p, q)$, $X=\mathbb R$ and consider~\eqref{lde} with $A(t)=-1$ for $t\ge 0$. Moreover, let $y\colon [0, \infty)\to \mathbb R$ be any solution of the equation
\[
y'(t)=-y(t)+\frac{1}{(1+t)^{1/\delta}}, \quad t\ge 0.
\]
Then, $\|y'+y\|_{L^q([0, \infty), \mathbb R)}<+\infty$. On the other hand, there does not exist a solution $x\colon [0, \infty)\to \mathbb R$ of~\eqref{lde} such that $\|x-y\|_{L^p([0, \infty), \mathbb R)}<+\infty$. Indeed, assuming that such $x$ exists, we have that $z:=y-x\in L^p([0, \infty), \mathbb  R)$ satisfies that 
\[
z'(t)=-z(t)+\frac{1}{(1+t)^{1/\delta}}, \quad t\ge 0.
\]
By the variation of constants formula, it yields that 
\[
z(t)=e^{-t}z(0)+\int_0^t e^{-(t-s)}\frac{1}{(1+s)^{1/\delta}}\, ds, \quad t\ge 0.
\]
Then, the same argument as in~\cite[Example 3.15]{AS} yields a contradiction.
\end{example}

We also establish a converse type result showing that the assumption that~\eqref{lde} admits an exponential dichotomy in the statements of Theorem~\ref{main:thm} and Corollary~\ref{Lpqlinear} is natural.

\begin{theorem}\label{conv}
Assume the following:
\begin{enumerate}
\item[(i)] \eqref{lde} admits $(L^p, L^q)$ Hyers-Ulam stability for $p, q\in [1, \infty)$;
\item[(ii)] \eqref{lde} has bounded growth, i.e. there exist $C, a>0$ such that 
\[
\|T(t,s)\| \le Ce^{a(t-s)} \quad t\ge s\ge 0;
\]
\item[(iii)] 
\[
\left \{ v\in X: \ t\mapsto T(t, 0)v\in L^p([0, \infty), X)\right \}
\]
is a closed and complemented subspace of $X$.
\end{enumerate}
Then, \eqref{lde} has exponential dichotomy.
\end{theorem}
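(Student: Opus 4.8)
The plan is to establish the converse—that $(L^p,L^q)$ Hyers-Ulam stability together with bounded growth and the complementation hypothesis forces an exponential dichotomy. This is a characterization of the form ``admissibility implies dichotomy,'' which is a classical circle of ideas going back to Perron, Massera–Schäffer, and in the Banach-space setting to work of Coppel, Sacker–Sell, and more recently the admissibility theory of Barreira–Valls and others. The overall strategy I would follow is to reduce the stated Hyers-Ulam property to an admissibility statement for the nonhomogeneous equation $x' = A(t)x + g(t)$, and then invoke (or reprove) the standard theorem that $L^q$-to-$L^p$ admissibility yields exponential dichotomy.

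\emph{Step 1: From Hyers-Ulam stability to admissibility.} First I would show that hypothesis (i) implies the following admissibility property: for every $g \in L^q([0,\infty),X)$ there exists a (mild) solution $x$ of $x' = A(t)x + g(t)$ lying in $L^p([0,\infty),X)$, with a uniform bound $\|x\|_{L^p} \le L\|g\|_{L^q}$. The link is that if $y$ solves the inhomogeneous equation $y' = A(t)y + g(t)$ then $\|y' - A(t)y\|_{L^q} = \|g\|_{L^q}$, so $y$ is exactly an admissible pseudosolution in the sense of Definition~\ref{pqhus} (applied with $f\equiv 0$). The stability guarantees a genuine solution $x$ of \eqref{lde} with $\|x - y\|_{L^p} \le L\|g\|_{L^q}$; then $y - x$ is a solution of the inhomogeneous equation lying in $L^p$. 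A technical point here is that one must produce, for a \emph{given} $g$, an actual continuously differentiable $y$ solving the inhomogeneous equation to feed into the definition, which is where hypothesis (ii), bounded growth, enters: it guarantees the variation-of-constants integral is well defined and that mild solutions exist and behave controllably.

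\emph{Step 2: Constructing the dichotomy projections.} With admissibility in hand, the candidate stable subspace is precisely the set in hypothesis (iii),
\[
\mathcal{S} := \left\{ v \in X : t \mapsto T(t,0)v \in L^p([0,\infty),X) \right\}.
\]
Hypothesis (iii) gives a closed complement, so I can define a bounded projection $P(0)$ onto $\mathcal{S}$ and propagate it by $P(t) := T(t,0)P(0)T(t,0)^{-1}$ where this makes sense, or more robustly define the projections through the admissibility/Green's-function construction so that the invariance condition $T(t,s)P(s) = P(t)T(t,s)$ holds automatically. The decay estimate \eqref{ed1} on the stable part is then extracted from the $L^p$-integrability of orbits starting in $\mathcal{S}$ combined with bounded growth, via a standard ``integrable plus bounded-growth implies exponential decay'' lemma (a Datko–Pazy type argument).

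\emph{Step 3: The unstable estimate and the main obstacle.} The reverse bound \eqref{ed2} on the complementary subspace is the delicate part: I must show that orbits off the stable subspace grow exponentially backward, i.e. that the restriction of $T(t,s)$ to $\ker P$ is invertible with the correct estimate. \textbf{The hard part will be} establishing this expansion estimate and, relatedly, showing that the admissibility produces a \emph{unique} bounded Green's operator whose kernel decomposition matches the complemented splitting from (iii)—this is exactly where one needs the full strength of admissibility (existence for all $g$) to rule out a nontrivial bounded subspace transverse to $\mathcal{S}$, and where the hypothesis $p,q<\infty$ is used to run the Datko-type integral estimates. I would handle this by a Green's-function/evolution-semigroup argument: associate to \eqref{lde} the evolution operator on $L^p$, show admissibility makes it surjective with closed range, and read off invertibility of the generator on the unstable fibre, from which \eqref{ed2} follows by the analogous integrability-to-exponential-decay reasoning run in backward time. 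The remaining conditions—continuity of $t\mapsto P(t)$, uniform boundedness $\sup_t\|P(t)\|<\infty$, and the matching of $D,\lambda$ across the two estimates—are then routine bookkeeping built on bounded growth.
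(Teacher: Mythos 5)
Your Step 1 is exactly the paper's proof: the authors take $w$ continuous with $w(0)=0$, $w(t)\to 0$ and $w\in L^q$ (which also settles the technical point you raise about producing a genuine $C^1$ solution $y$ of the inhomogeneous equation), apply the Hyers--Ulam property to get a true solution $x$ of \eqref{lde} with $z:=y-x\in L^p([0,\infty),X)$ solving $z'=A(t)z+w(t)$, i.e.\ $(L^p,L^q)$-admissibility. For your Steps 2--3 the paper simply invokes \cite[Theorem 3.3(i)]{SS}, whose proof is precisely the Datko/Green's-function machinery you sketch (with hypotheses (ii) and (iii) being exactly the bounded-growth and complementation assumptions that theorem requires), so your outline is the intended argument and would be complete upon either citing that result or executing your sketch in detail.
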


\begin{proof}
Take $w\colon [0, \infty)\to X$ continuous such that $w(0)=0$, $\lim\limits_{t\to \infty}w(t)=0$ and $w\in L^q([0, \infty), X)$. Let $y\colon [0, \infty)\to X$ be a solution of the equation
\[
y'(t)=A(t)y(t)+w(t), \quad t\ge 0.
\]
Then, $\|y'(\cdot)-A(\cdot)y(\cdot)\|_{L^q([0, \infty), X)}=\|w\|_{L^q([0, \infty), X)}<+\infty$. Since~\eqref{lde}
admits $(L^p, L^q)$ Hyers-Ulam stability, there exists a solution $x\colon [0, \infty)\to X$ of~\eqref{lde} such that 
\[\|x-y\|_{L^p([0, \infty), X)}\le L\|w\|_{L^q([0, \infty), X)}<+\infty,
\]
where $L$ is the associated HUS constant. Set $z:=y-x$. Then, $z\in L^p([0, \infty), X)$ and 
\[
z'(t)=A(t)z(t)+w(t), \quad t\ge 0.
\]
Consequently, 
\[
z(t)=T(t, s)z(s)+\int_s^tT(t, \tau)w(\tau)\, d\tau, \quad t\ge s\ge 0.
\]
By~\cite[Theorem 3.3.(i)]{SS} we conclude that~\eqref{lde} admits an exponential dichotomy.
\end{proof}

\begin{remark}
We note that in the statement of Theorem~\ref{conv} we haven't required that $p\ge q$.
\end{remark}

\section{Lower bound of HUS constants for 2-dimensional autonomous linear systems}\label{last}

In this section, we consider the autonomous linear system
\begin{equation}\label{2Dlde}
x'=Ax, \quad t\ge 0,
\end{equation}
where $A \in \mathbb{C}^{2\times2}$ is a $2\times2$ constant matrix. 
Throughout  this section, we will use the maximum norm for the vector $v$ defined by 
\[ |v|_{\infty} = \left|\begin{pmatrix}
v_{1}\\
v_{2}
\end{pmatrix}\right|_{\infty} := \max\{|v_{1}|, |v_{2}|\}, \]
with the operator norm for the matrix $\Phi$ defined by
\[ \|\Phi\|_{\infty} = \left\|\begin{pmatrix}
\phi_{11} & \phi_{12}\\
\phi_{21} & \phi_{22}
\end{pmatrix}\right\|_{\infty} := \max\{|\phi_{11}|+|\phi_{12}|, |\phi_{21}|+|\phi_{22}|\}. \]
Let $M$ be a $2\times2$ invertible constant matrix. Using $z= M^{-1}x$, \eqref{2Dlde} is transformed into the equation
\begin{equation}\label{2Dlde2}
z'= M^{-1}AMz, \quad t\ge 0.
\end{equation}
It is well known that if $A$ is invertible and $M$ is chosen appropriately, $M^{-1}AM$ admits one of the following two forms:
\[ \text{(i)}\;\;\begin{pmatrix}
\mu_1 & 0\\
0 & \mu_2
\end{pmatrix}, \quad\text{or}\quad \text{(ii)}\;\;\begin{pmatrix}
\nu & 1\\
0 & \nu
\end{pmatrix}, \]
where $\mu_1$, $\mu_2$, $\nu\in \mathbb{C}$. 
In addition, we can find the fundamental matrices for \eqref{2Dlde2} as
\[ \text{(i)}\;\;\Phi_1(t) := \begin{pmatrix}
e^{\mu_1 t} & 0\\
0 & e^{\mu_2 t}
\end{pmatrix}, \quad \text{(ii)}\;\; \Phi_2(t) := e^{\nu t}\begin{pmatrix}
1 & t\\
0 & 1
\end{pmatrix} \]
for these two cases. Also, we see that for $j \in \{1, 2\}$ and $t$, $s \ge 0$, \[T_j(t,s)=M\Phi_j(t)(M\Phi_j(s))^{-1} \] is the evolution family associated to~\eqref{2Dlde}. Thus,
\begin{align*}
 \|T_j(t,s)\| &= \|M\Phi_j(t)\Phi_j^{-1}(s)M^{-1}\| \le \|M\|\|M^{-1}\|\|\Phi_j(t)\Phi_j^{-1}(s)\|, \quad \text{for $t, s\ge 0$.}
\end{align*}
Now we assume that \eqref{2Dlde} admits an exponential expansion on $[0,\infty)$. 
Then we have
\begin{align*}
 \|T_1(t,s)\|_{\infty} & \le \|M\|_{\infty}\|M^{-1}\|_{\infty}\left\|\begin{pmatrix}
e^{-\mu_1(s-t)} & 0\\
0 & e^{-\mu_2(s-t)}
\end{pmatrix}\right\|_{\infty}\\
  &= \|M\|_{\infty}\|M^{-1}\|_{\infty}\max\{|e^{-\mu_1(s-t)}|, |e^{-\mu_2(s-t)}|\}
\end{align*}
and
\begin{align*}
 \|T_2(t,s)\|_{\infty} & \le \|M\|_{\infty}\|M^{-1}\|_{\infty}\left\| e^{-\nu(s-t)}\begin{pmatrix}
1 & -(s-t)\\
0 & 1
\end{pmatrix}\right\|_{\infty} \\
  &= \|M\|_{\infty}\|M^{-1}\|_{\infty}(1+s-t) |e^{-\nu(s-t)}|
\end{align*}
for $0\le t\le s$. Since \eqref{2Dlde} admits an exponential expansion on $[0,\infty)$, we can conclude that $\Re(\mu_1)>0$, $\Re(\mu_2)>0$, $\Re(\nu)>0$ hold, and that
\begin{align*}
 \|T_1(t,s)\|_{\infty} & \le \|M\|_{\infty}\|M^{-1}\|_{\infty}e^{-\min\{\Re(\mu_1),\Re(\mu_2)\}(s-t)}
\end{align*}
and
\begin{align*}
 \|T_2(t,s)\|_{\infty} & \le \|M\|_{\infty}\|M^{-1}\|_{\infty}(1+s-t)e^{-\Re(\nu)(s-t)} \le \|M\|_{\infty}\|M^{-1}\|_{\infty}\frac{e^{\delta-1}}{\delta}e^{-(\Re(\nu)-\delta)(s-t)}
\end{align*}
for $0\le t\le s$, and for any $0<\delta< \min\{1,\Re(\nu)\}$, where $\Re(w)$ denotes the real part of $w \in \mathbb{C}$. Here we used that $(1+\tau)e^{-\delta \tau}\le \frac{e^{\delta-1}}{\delta}$ for $\tau\ge 0$.
From these facts and Remark~\ref{rem:HUSconst}, we obtain the following result. 

\begin{cor}\label{Lpqlinear2}
Let $\infty \ge p\ge q\ge 1$. Suppose that $A$ is invertible and \eqref{2Dlde} admits an exponential expansion on $[0,\infty)$. Then there exists $L>0$ such that for each $\varepsilon>0$ and  differentiable function $y\colon [0, \infty) \to \mathbb{C}^2$ such that $y'(\cdot)-Ay(\cdot)\in L^q([0, \infty), \mathbb{C}^2)$  and
\begin{equation}\label{2Dy}
    \|y'(\cdot)-Ay(\cdot)\|_{L^q([0, \infty), \mathbb{C}^2)}\le \varepsilon,
\end{equation}
there exists a solution $x\colon [0, \infty)\to \mathbb{C}^2$ of~\eqref{2Dlde} satisfying $x-y\in L^p([0, \infty), \mathbb{C}^2)$ and 
\begin{equation}\label{2Dapprox}
\| x-y\|_{L^p([0, \infty), \mathbb{C}^2)}\le L\varepsilon.
\end{equation}
In addition, the HUS constant $L$ is given by the following:
\begin{itemize}
\item[(i)] If  $M$ is such that  $M^{-1}AM=\begin{pmatrix}
\mu_1 & 0\\
0 & \mu_2
\end{pmatrix}$, then
\[ L=\|M\|_{\infty}\|M^{-1}\|_{\infty}\left(\frac{1}{r\min\{\Re(\mu_1),\Re(\mu_2)\}}\right )^{\frac 1 r}, \]
where $\mu_1$, $\mu_2\in \mathbb{C}$ are the eigenvalues of $A$ with $\Re(\mu_1)>0$ and $\Re(\mu_2)>0$, and $r$ is given by~\eqref{r}. 
\item[(ii)] If  $M$ is such that $M^{-1}AM=\begin{pmatrix}
\nu & 1\\
0 & \nu
\end{pmatrix}$, then, for any $0<\delta< \min\{1,\Re(\nu)\}$, 
\[ L=\|M\|_{\infty}\|M^{-1}\|_{\infty}\frac{e^{\delta-1}}{\delta}\left(\frac{1}{r(\Re(\nu)-\delta)}\right )^{\frac 1 r}, \]
where $\nu\in \mathbb{C}$ is the unique eigenvalue of $A$ with $\Re(\nu)>0$, and $r$ is given by~\eqref{r}.
\end{itemize}
\end{cor}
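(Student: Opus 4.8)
The plan is to deduce the corollary directly from Corollary~\ref{Lpqlinear}, sharpened by the exponential-expansion refinement recorded in Remark~\ref{rem:HUSconst}, using the operator-norm estimates for $T_j(t,s)$ already assembled immediately above the statement. Since $A$ is invertible and \eqref{2Dlde} admits an exponential expansion, the linear equation \eqref{2Dlde} satisfies the hypotheses of Corollary~\ref{Lpqlinear} with projection $P(t)\equiv 0$; hence for every $\infty\ge p\ge q\ge 1$ it admits $(L^p,L^q)$ Hyers-Ulam stability, which is exactly the conclusion \eqref{2Dy}--\eqref{2Dapprox}. It therefore only remains to pin down the value of the HUS constant $L$ in each of the two cases.

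First I would invoke the exponential-expansion refinement of Remark~\ref{rem:HUSconst}: when $P(t)\equiv 0$, one of the two half-line integrals appearing in the proof of Theorem~\ref{main:thm} is vacuous, so the factor $2D$ is replaced by $D$ and the HUS constant for the linear equation becomes $L=D\bigl(\tfrac{1}{\lambda r}\bigr)^{1/r}$, with $r$ determined by $\tfrac1p+1=\tfrac1q+\tfrac1r$. Translating this abstract relation into the explicit form \eqref{r} of $r$ is routine and was already carried out in Example~\ref{ex1}, so I would simply cite it.

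The core of the argument is then to read off the dichotomy constants $D$ and $\lambda$ from the bounds on $\|T_j(t,s)\|_\infty$ established just before the statement, treating the two Jordan normal forms separately. In case (i), the estimate $\|T_1(t,s)\|_\infty\le \|M\|_\infty\|M^{-1}\|_\infty\, e^{-\min\{\Re(\mu_1),\Re(\mu_2)\}(s-t)}$ for $0\le t\le s$ exhibits \eqref{ed2} with $D=\|M\|_\infty\|M^{-1}\|_\infty$ and $\lambda=\min\{\Re(\mu_1),\Re(\mu_2)\}$; substituting into $L=D\bigl(\tfrac{1}{\lambda r}\bigr)^{1/r}$ yields the first formula. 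In case (ii), the estimate $\|T_2(t,s)\|_\infty\le \|M\|_\infty\|M^{-1}\|_\infty\tfrac{e^{\delta-1}}{\delta}e^{-(\Re(\nu)-\delta)(s-t)}$, valid for any $0<\delta<\min\{1,\Re(\nu)\}$, gives \eqref{ed2} with $D=\|M\|_\infty\|M^{-1}\|_\infty\tfrac{e^{\delta-1}}{\delta}$ and $\lambda=\Re(\nu)-\delta$, and substitution yields the second formula.

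I do not anticipate a genuine obstacle, since the analytic content is entirely inherited from Theorem~\ref{main:thm} and Corollary~\ref{Lpqlinear}. The only point requiring care is the bookkeeping in case (ii): the polynomial factor $1+(s-t)$ produced by the Jordan block must be absorbed into the exponential through the inequality $(1+\tau)e^{-\delta\tau}\le e^{\delta-1}/\delta$, which is what forces the $\delta$-dependent tradeoff between the prefactor $e^{\delta-1}/\delta$ and the effective decay rate $\Re(\nu)-\delta$. One must also note that $\Re(\mu_1),\Re(\mu_2),\Re(\nu)>0$ under the expansion hypothesis, as this positivity is precisely what guarantees the estimates decay genuinely in $s-t$ and hence that $\lambda>0$ in each case.
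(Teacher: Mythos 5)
Your proposal is correct and follows essentially the same route as the paper: the authors likewise derive the bounds on $\|T_j(t,s)\|_\infty$ (including the absorption of the factor $1+(s-t)$ via $(1+\tau)e^{-\delta\tau}\le e^{\delta-1}/\delta$) to read off $D$ and $\lambda$, and then apply Corollary~\ref{Lpqlinear} together with the exponential-expansion refinement $L=D\bigl(\tfrac{1}{\lambda r}\bigr)^{1/r}$ from Remark~\ref{rem:HUSconst}. No gaps.
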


If \eqref{2Dlde} admits an exponential expansion on $[0,\infty)$, then we can find the lower bound of the HUS constant $L$ in Corollary~\ref{Lpqlinear2}. 

\begin{theorem}\label{Lower:thm}
Let $\infty \ge p\ge q\ge 1$. Suppose that $A$ is invertible and that~\eqref{2Dlde} admits an exponential expansion on $[0,\infty)$. 
Then any HUS constant $L$ satisfies
\[ L \ge \begin{cases}
\frac{|A^{-1}u|(q\gamma)^{\frac{1}{q}}}{|(\gamma A^{-1}+I_2)u|(p\gamma)^{\frac{1}{p}}} & \text{if $p<\infty$;} \\
\frac{|A^{-1}u|(q\gamma)^{\frac{1}{q}}}{|(\gamma A^{-1}+I_2)u|} & \text{if $p=\infty$ and $q\in [1, \infty)$;} \\
\frac{|A^{-1}u|}{|(\gamma A^{-1}+I_2)u|} & \text{if $p=q=\infty$,}
\end{cases} \]
where $I_2$ is the $2\times 2$ identity matrix, $u$ is an arbitrary unit vector and $\gamma$ is an arbitrary positive constant. 
\end{theorem}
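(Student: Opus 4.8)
The plan is to mirror the sharpness argument of Example~\ref{ex2}, but now in $\mathbb C^2$ and for an arbitrary admissible pair $(p,q)$. The idea is to exhibit, for each choice of unit vector $u$ and constant $\gamma>0$, a single explicit pseudosolution $y$ whose $L^q$-defect and whose $L^p$-distance to the true solution set can both be computed \emph{exactly}. Since the inequality in Definition~\ref{pqhus} must hold for \emph{every} HUS constant $L$ and this particular $y$, it forces $L$ to dominate the ratio of these two quantities, which is precisely the asserted lower bound; letting $u,\gamma$ range then yields the family of bounds in the statement.

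Concretely, first I would set
\[
y(t):=e^{-\gamma t}A^{-1}u,\qquad t\ge 0.
\]
This is smooth, and a direct differentiation gives
\[
y'(t)-Ay(t)=-\gamma e^{-\gamma t}A^{-1}u-e^{-\gamma t}u=-e^{-\gamma t}(\gamma A^{-1}+I_2)u,
\]
so that $\|y'(\cdot)-Ay(\cdot)\|_{L^q([0,\infty),\mathbb C^2)}=|(\gamma A^{-1}+I_2)u|\,\|e^{-\gamma t}\|_{L^q([0,\infty),\mathbb R)}=:\varepsilon$. Because \eqref{2Dlde} admits an exponential expansion, every eigenvalue of $A$ has positive real part; hence $A+\gamma I_2=A(I_2+\gamma A^{-1})$ is invertible, so $\gamma A^{-1}+I_2=A^{-1}(A+\gamma I_2)$ is invertible and $(\gamma A^{-1}+I_2)u\ne 0$. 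Thus $\varepsilon\in(0,\infty)$ and $y$ is a legitimate pseudosolution.

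Next I would identify the true solution provided by the $(L^p,L^q)$ Hyers-Ulam stability. By Definition~\ref{pqhus} there is a solution $x$ of \eqref{2Dlde} with $\|x-y\|_{L^p}\le L\varepsilon$. Since $y$ decays exponentially while, under an exponential expansion, every \emph{nonzero} solution satisfies $|x(t)|\ge D^{-1}e^{\lambda t}|x(0)|$ (the growth estimate used in the uniqueness remark above), finiteness of $\|x-y\|_{L^p}$ forces $x\equiv 0$. Consequently
\[
\|x-y\|_{L^p}=\|y\|_{L^p}=|A^{-1}u|\,\|e^{-\gamma t}\|_{L^p([0,\infty),\mathbb R)},
\]
and the Hyers-Ulam inequality rearranges to
\[
L\ge \frac{\|y\|_{L^p}}{\varepsilon}=\frac{|A^{-1}u|\,\|e^{-\gamma t}\|_{L^p([0,\infty),\mathbb R)}}{|(\gamma A^{-1}+I_2)u|\,\|e^{-\gamma t}\|_{L^q([0,\infty),\mathbb R)}}.
\]

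Finally I would evaluate the scalar norms: $\|e^{-\gamma t}\|_{L^p([0,\infty),\mathbb R)}=(p\gamma)^{-1/p}$ for $p<\infty$ while $\|e^{-\gamma t}\|_{L^\infty([0,\infty),\mathbb R)}=1$, and likewise for $q$. Substituting these and using $(p\gamma)^{-1/p}/(q\gamma)^{-1/q}=(q\gamma)^{1/q}/(p\gamma)^{1/p}$ produces the three cases of the statement. I expect the only genuinely delicate point to be the step $x\equiv 0$: one must argue that no nontrivial solution can lie at finite $L^p$-distance from the decaying $y$, and this is exactly where the exponential \emph{expansion} hypothesis (rather than a general dichotomy) is essential. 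Everything else is bookkeeping with the explicit exponential integrals.
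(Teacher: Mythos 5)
Your proposal is correct and follows the same overall strategy as the paper: the same one--parameter family of test functions $y(t)=\text{const}\cdot e^{-\gamma t}A^{-1}u$ (you work with the unnormalized version and take ratios, the paper rescales so that the $L^q$-defect is exactly $\varepsilon$; these are equivalent), the same identification $x\equiv 0$, and the same evaluation of the exponential integrals. The one place where you genuinely diverge is the step $x\equiv 0$: the paper passes to the Jordan normal form of $A$, writes out the general solution in each of the two cases, and shows $\liminf_{t\to\infty}|x(t)-y(t)|=\infty$ unless the coefficients vanish (omitting the details of the Jordan-block case), whereas you invoke the abstract lower bound $|x(t)|\ge D^{-1}e^{\lambda t}|x(0)|$ coming from the exponential expansion, exactly as in the paper's uniqueness remark following Corollary~\ref{Lpqlinear}. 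Your route is cleaner: it handles both Jordan cases at once, makes explicit that the expansion hypothesis (rather than a general dichotomy) is what forces $x\equiv 0$, and avoids the coordinate computation entirely. You also add the worthwhile observation that $(\gamma A^{-1}+I_2)u\neq 0$ because $-\gamma$ cannot be an eigenvalue of $A$ when all eigenvalues have positive real part, a point the paper leaves implicit but which is needed for the denominator in the bound to be nonzero.
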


\begin{proof}
In the proof below, we prove it for the case $p<\infty$, but if $q=\infty$ or $p=\infty$, we can consider $(q\gamma)^{\frac{1}{q}}$ or $(p\gamma)^{\frac{1}{p}}$ appearing below as 1, respectively. 

Let $\varepsilon$ and $\gamma$ be arbitrary positive constants, and $u$ be an arbitrary unit vector. Set
\[ y(t) = -\frac{(q\gamma)^{\frac{1}{q}}\varepsilon e^{-\gamma t}}{|(\gamma A^{-1}+I_2)u|}A^{-1}u, \quad t \ge 0. \]
Then we have
\[ y'(t) - Ay(t) = \frac{(q\gamma)^{\frac{1}{q}}\varepsilon e^{-\gamma t}}{|(\gamma A^{-1}+I_2)u|}(\gamma A^{-1}+I_2)u, \]
and thus,
\[ |y'(t) - Ay(t)| = (q\gamma)^{\frac{1}{q}}\varepsilon e^{-\gamma t} \]
for $t \ge 0$. This implies that
\[ \|y'(\cdot) - Ay(\cdot)\|_{L^q([0, \infty), \mathbb{C}^2)} = \varepsilon < \infty. \]
Hence $y$ satisfies \eqref{2Dy}. Since $L$ is the HUS constant, there exists a solution $x\colon [0, \infty)\to \mathbb{C}^2$ of~\eqref{2Dlde} such that $x-y\in L^p([0, \infty), \mathbb{C}^2)$ and \eqref{2Dapprox} holds.

Next, we show that $x\equiv 0$. First, we consider the case when there is an invertible  matrix $M$ such that  $M^{-1}AM=\begin{pmatrix}
\mu_1 & 0\\
0 & \mu_2
\end{pmatrix}$. Note here that, by Corollary~\ref{Lpqlinear2}, we know that $\Re(\mu_1)>0$ and $\Re(\mu_2)>0$. Letting $z= M^{-1}x$, then $z$ is  a solution of~\eqref{2Dlde2} and thus is of the form
\[ z(t) = \begin{pmatrix}
e^{\mu_1 t} & 0\\
0 & e^{\mu_2 t}
\end{pmatrix}
\begin{pmatrix}
c_1\\
c_2
\end{pmatrix}, \]
for some $c_1$, $c_2 \in \mathbb{C}$. Moreover,  $x(t) = Mz(t)$. Assume that $\begin{pmatrix}
c_1\\
c_2
\end{pmatrix}\ne \begin{pmatrix}
0\\
0
\end{pmatrix}$. Then
\begin{align*}
 &|x(t)-y(t)| = \left|M\begin{pmatrix}
c_1e^{\mu_1 t}\\
c_2e^{\mu_2 t}
\end{pmatrix}
+\frac{(q\gamma)^{\frac{1}{q}}\varepsilon e^{-\gamma t}}{|(\gamma A^{-1}+I_2)u|}A^{-1}u\right|\\
  &= e^{\min\{\Re(\mu_1),\Re(\mu_2)\}t}\left|M\begin{pmatrix}
c_1e^{\left(\mu_1-\min\{\Re(\mu_1),\Re(\mu_2)\}\right) t}\\
c_2e^{\left(\mu_2-\min\{\Re(\mu_1),\Re(\mu_2)\}\right) t}
\end{pmatrix}
+\frac{(q\gamma)^{\frac{1}{q}}\varepsilon e^{-\left(\gamma+\min\{\Re(\mu_1),\Re(\mu_2)\}\right) t}}{|(\gamma A^{-1}+I_2)u|}A^{-1}u\right|
\end{align*}
for $t \ge 0$. Since $\Re(\mu_1)>0$, $\Re(\mu_2)>0$ and $\begin{pmatrix}
c_1\\
c_2
\end{pmatrix}\ne \begin{pmatrix}
0\\
0
\end{pmatrix}$, we have that 
\[ \liminf_{t\to\infty}\left|M\begin{pmatrix}
c_1e^{\left(\mu_1-\min\{\Re(\mu_1),\Re(\mu_2)\}\right) t}\\
c_2e^{\left(\mu_2-\min\{\Re(\mu_1),\Re(\mu_2)\}\right) t}
\end{pmatrix}
+\frac{(q\gamma)^{\frac{1}{q}}\varepsilon e^{-\left(\gamma+\min\{\Re(\mu_1),\Re(\mu_2)\}\right) t}}{|(\gamma A^{-1}+I_2)u|}A^{-1}u\right| > 0, \]
and thus,
\[ \liminf_{t\to\infty}|x(t)-y(t)|=\infty. \]
This implies that $x-y \not\in L^p([0, \infty), \mathbb{C}^2)$, which yields a contradiction. We conclude that $\begin{pmatrix}
c_1\\
c_2
\end{pmatrix}= \begin{pmatrix}
0\\
0
\end{pmatrix}$, that is, $x \equiv 0$. Consequently, 
\[ |x(t)-y(t)| = \left|\frac{(q\gamma)^{\frac{1}{q}}\varepsilon e^{-\gamma t}}{|(\gamma A^{-1}+I_2)u|}A^{-1}u\right| = \frac{|A^{-1}u|(q\gamma)^{\frac{1}{q}}\varepsilon e^{-\gamma t}}{|(\gamma A^{-1}+I_2)u|}, \quad t \ge 0. \]
It follows  that
\[ \|x-y\|_{L^p([0, \infty), \mathbb{C}^2)} = \frac{|A^{-1}u|(q\gamma)^{\frac{1}{q}}}{|(\gamma A^{-1}+I_2)u|(p\gamma)^{\frac{1}{p}}}\varepsilon < \infty, \]
which gives that $L \ge \frac{|A^{-1}u|(q\gamma)^{\frac{1}{q}}}{|(\gamma A^{-1}+I_2)u|(p\gamma)^{\frac{1}{p}}}$.

The case when there is an invertible matrix $M$ such that $M^{-1}AM=\begin{pmatrix}
\nu & 1\\
0 & \nu
\end{pmatrix}$ can be treated using the same arguments as in the first case, and thus we omit the details. The proof is complete.
\end{proof}

\begin{example}\label{ex3}
Consider the autonomous linear system
\begin{equation}\label{2Dlde-ex3}
x'=\begin{pmatrix}
\mu_1 & 0\\
0 & \mu_2
\end{pmatrix}x, \quad t\ge 0,
\end{equation}
where $\mu_1>0$ and $\mu_2>0$. Clearly, \eqref{2Dlde-ex3} admits an exponential expansion on $[0,\infty)$. Let $p=q\ge1$. By Corollary~\ref{Lpqlinear2}, \eqref{2Dlde-ex3} admits the $(L^p, L^p)$ Hyers-Ulam stability with an HUS constant $L>0$. Since $r=1$, $M=I_2$, $\Re(\mu_1)=\mu_1$ and $\Re(\mu_2)=\mu_2$ hold, we have
\[ L=\frac{1}{\min\{\mu_1,\mu_2\}}. \]

We will now show that the constant $L$ is the minimal HUS constant for \eqref{2Dlde-ex3}. Assume the contrary, i.e. that there exists an HUS constant $L_*>0$ such that $0<L_*<L$. By Theorem~\ref{Lower:thm}, we obtain
\[ L_* \ge \frac{|A^{-1}u|}{|(\gamma A^{-1}+I_2)u|}, \]
where $u$ is an arbitrary unit vector and $\gamma$ is an arbitrary positive constant. 
Let $u = \begin{pmatrix}
u_{1}\\
u_{2}
\end{pmatrix}$. Since $A^{-1}$ is given by
\[ A^{-1} = \begin{pmatrix}
\frac{1}{\mu_1} & 0\\
0 & \frac{1}{\mu_2}
\end{pmatrix}, \]
we have that
\[ |A^{-1}u|_{\infty} = \max\left\{\frac{1}{\mu_1}|u_{1}|, \frac{1}{\mu_2}|u_{2}|\right\}, \quad |(\gamma A^{-1}+I_2)u|_{\infty} = \max\left\{\left(\frac{\gamma}{\mu_1}+1\right)|u_{1}|, \left(\frac{\gamma}{\mu_2}+1\right)|u_{2}|\right\}. \]
First we consider the case when $\mu_1\ge \mu_2>0$. Since $u$ is an arbitrary unit vector, if we choose $u$ as $u_1=0$ and $u_2=1$, we get
\[ L_* \ge \frac{|A^{-1}u|_{\infty}}{|(\gamma A^{-1}+I_2)u|_{\infty}} = \frac{1}{\gamma+\mu_2}. \] Letting $\gamma \to 0$, we obtain that 
\[
L_*\ge \frac{1}{\mu_2}=L,
\]
which yields a  contradiction. In the case of $0<\mu_1<\mu_2$, a contradiction can be derived in a similar way. Therefore, \eqref{2Dlde-ex3} has Hyers-Ulam stability with the minimal HUS constant $L=\frac{1}{\min\{\mu_1,\mu_2\}}$ when $p=q\ge1$. This example argues that our results are sharp. 
\end{example}

\begin{remark}
Let $A=\begin{pmatrix}
\mu_1 & 0\\
0 & \mu_2
\end{pmatrix}$. Then the minimal HUS constant $L$ in Example \ref{ex3} is $L=\frac{1}{\min\{\mu_1,\mu_2\}} =\|A^{-1}\|_{\infty}$. The minimal HUS constant of this type has been obtained for the general matrix $A$ in the study of the classical Hyers-Ulam stability ($p=q=\infty$). We refer to~\cite[Theorem 4.3]{AndOni}. For other results dealing with the  minimal HUS constants for linear differential and difference operators, the reader is referred to \cite{BaiBlaPop1,BaiBlaPop2,BaiPop1,BaiPop2,BrzPopRasXu,MiuMiyTak,TakTakMiuMiy}. 
\end{remark}

\section*{Acknowledgments}
D. D. was supported in part by  the University of Rijeka under the projects uniri-iskusni-prirod-23-98
3046.
M. O. was supported by the Japan Society for the Promotion of Science (JSPS) KAKENHI (grant number JP20K03668).


\end{document}